\documentclass[11pt]{amsart}

\usepackage[top=1.5in, bottom=1in, left=0.9in, right=0.9in]{geometry}

\usepackage{amscd,amsmath,amssymb,fancyhdr,color}
\usepackage[utf8]{inputenc}
\usepackage{amsfonts}
\usepackage{amsthm}
\usepackage{accents}
\usepackage{graphicx}
\usepackage{float}
\usepackage{subcaption}
\usepackage{verbatim}
\usepackage{indentfirst}
\usepackage{dsfont}
\usepackage{tikz}
\usepackage{tikz-cd}
\usetikzlibrary{matrix}
\usepackage[all]{xy}
\usepackage{enumerate}
\usepackage{extarrows}
\usepackage{csquotes}

\usepackage{scalerel,stackengine}
\stackMath
\newcommand\reallywidehat[1]{%
	\savestack{\tmpbox}{\stretchto{%
			\scaleto{%
				\scalerel*[\widthof{\ensuremath{#1}}]{\kern-.6pt\bigwedge\kern-.6pt}%
				{\rule[-\textheight/2]{1ex}{\textheight}}%WIDTH-LIMITED BIG WEDGE
			}{\textheight}% 
		}{0.5ex}}%
	\stackon[1pt]{#1}{\tmpbox}%
}

\usepackage{faktor}

\usepackage{BOONDOX-uprscr}

\usepackage[backref=page]{hyperref}
\renewcommand*{\backref}[1]{}
\renewcommand*{\backrefalt}[4]{%
	\ifcase #1 (Not cited.)%
	\or        (Cited on page~#2.)%
	\else      (Cited on pages~#2.)%
	\fi}

\hypersetup{
	colorlinks   = true,
	citecolor    = magenta
}

\newcommand{\K}{K\"ahler}

\DeclareMathOperator{\Ker}{Ker}

\numberwithin{equation}{section}

\def\eqref#1{(\ref{#1})}

\newcommand{\N}{{\mathbb N}}
\newcommand{\Z}{{\mathbb Z}}
\newcommand{\C}{{\mathbb C}}
\newcommand{\R}{{\mathbb R}}
\newcommand{\Q}{{\mathbb Q}}

\renewcommand{\H}{{\mathbb H}}

\newcommand{\delb}{\overline{\partial}}

\def\1{\sqrt{-1}\:}

\newcommand{\cntrct}                % contraction with a vector field
{\hspace{2pt}\raisebox{1pt}{\text{$\lrcorner$}}\hspace{2pt}}

\newcommand{\End}{\operatorname{End}}

\newcommand{\Id}{\operatorname{Id}}

\newcommand{\Aut}{\operatorname{Aut}}

\renewcommand{\dim}{\operatorname{dim}}

\newcommand{\coker}{\operatorname{coker}}

\newcommand{\Spec}{\operatorname{Spec}}

\renewcommand{\Im}{\operatorname{Im}}

\newcommand{\ie}{{\em i.e. }}

\renewcommand{\to}{\longrightarrow}

\newcounter{Mycounter}[section]
\newcounter{lemma}[section]
\newcounter{claim}[section]
\newcounter{sublemma}[section]
\newcounter{corollary}[section]
\newcounter{theorem}[section]
\newcounter{conjecture}[section]
\newcounter{proposition}[section]
\newcounter{definition}[section]
\newcounter{example}[section]
\newcounter{remark}[section]
\newcounter{problem}[section]
\newcounter{question}[section]
\makeatletter

\@addtoreset{equation}{section}

\@addtoreset{footnote}{section}

\makeatother

\makeatletter
\DeclareRobustCommand*{\mfaktor}[3][]
{
	{ \mathpalette{\mfaktor@impl@}{{#1}{#2}{#3}} }
}
\newcommand*{\mfaktor@impl@}[2]{\mfaktor@impl#1#2}
\newcommand*{\mfaktor@impl}[4]{
	\settoheight{\faktor@zaehlerhoehe}{\ensuremath{#1#2{#3}}}%
	\settoheight{\faktor@nennerhoehe}{\ensuremath{#1#2{#4}}}%
	\raisebox{-0.5\faktor@zaehlerhoehe}{\ensuremath{#1#2{#3}}}%
	\mkern-4mu\diagdown\mkern-5mu%
	\raisebox{0.5\faktor@nennerhoehe}{\ensuremath{#1#2{#4}}}%
}
\makeatother

\usetikzlibrary{arrows,chains,matrix,positioning,scopes}

\makeatletter
\tikzset{join/.code=\tikzset{after node path={%
			\ifx\tikzchainprevious\pgfutil@empty\else(\tikzchainprevious)%
			edge[every join]#1(\tikzchaincurrent)\fi}}}
\makeatother

\tikzset{>=stealth',every on chain/.append style={join},
	every join/.style={->}}

\makeatletter
\newtheorem*{rep@theorem}{\rep@title}
\newcommand{\newreptheorem}[2]{%
	\newenvironment{rep#1}[1]{%
		\def\rep@title{\ref{##1}}%
		\begin{rep@theorem}}%
		{\end{rep@theorem}}}
\makeatother

\newreptheorem{theorem}{Theorem}

\begin{document}
	
	\newpage
	
	\title[Dolbeault cohomology of Endo-Pajitnov manifolds]{Dolbeault cohomology of Endo-Pajitnov manifolds}

  	\author{Liviu Ornea}
	\address{Liviu Ornea \newline
		\textsc{\indent University of Bucharest, Faculty of Mathematics and Computer Science\newline 
			\indent 14 Academiei Str., Bucharest, Romania \newline
			\indent \indent and \newline
			\indent Institute of Mathematics ``Simion Stoilow'' of the Romanian Academy\newline 
			\indent 21 Calea Grivitei Street, 010702, Bucharest, Romania}}
	\email{lornea@fmi.unibuc.ro; liviu.ornea@imar.ro}

	\author{Miron Stanciu}
	\address{Miron Stanciu \newline
		\textsc{\indent University of Bucharest, Faculty of Mathematics and Computer Science\newline 
			\indent 14 Academiei Str., Bucharest, Romania \newline
			\indent \indent and \newline
			\indent Institute of Mathematics ``Simion Stoilow'' of the Romanian Academy\newline 
			\indent 21 Calea Grivitei Street, 010702, Bucharest, Romania}}
	\email{miron.stanciu@fmi.unibuc.ro; miron.stanciu@imar.ro}
	
	\thanks{Both authors are partially supported by the PNRR-III-C9-2023-I8 grant CF 149/31.07.2023 Conformal Aspects of Geometry and Dynamics. \\[.1cm]
		{\bf Keywords:} Inoue surface, OT-manifold, Endo-Pajitnov manifold, Leray spectral sequence, Dolbeault cohomology, Cousin group. \\
		{\bf 2020 Mathematics Subject Classification:} 53C55, 22E25, 32J18. 
	}
	
	\date{\today}

	\begin{abstract}
		Endo-Pajitnov manifolds are compact non-K\"ahler manifolds which generalize the Inoue surfaces $S_M$ to higher dimensions. We compute their Dolbeault cohomology and show that they satisfy the Hodge decomposition at the level of dimensions.
	\end{abstract}
	
	\maketitle
	
	\hypersetup{linkcolor=blue}
	\tableofcontents

	\section{Introduction}
	\label{sec:introduction}
	In the realm of non-K\"ahler geometry, Inoue surfaces (\cite{inoue}) play a prominent role. They are affine surfaces with no curves and no non-constant meromorphic functions, bear a solvmanifold structure and their construction is related to number theory. From the metric viewpoint, almost all of them admit locally conformally K\"ahler metrics (lcK), see \cite{OV:book}. As such, various attempts were made to generalize their construction to arbitrary dimension. Much studied nowadays are the Oeljeklaus-Toma manifolds (OT-manifolds), \cite{ot}, which generalize the Inoue surfaces $S_M$ associated to a matrix $M\in \mathrm{SL}(3,\Z)$. OT-manifolds are non-K\"ahler and share many of the analytic properties of the Inoue surfaces, but they are not always lcK (\cite{vuliOT}). More recently, another generalization of the same Inoue surfaces $S_M$ was proposed by Endo and Pajitnov (\cite{pajitnov1}, see Subsection \ref{EP_Subsection} for details). These manifolds  have a rich analytic and metric geometry (\cite{ciulicaotimanstanciu,ciulica1,ciulica2}). In particular, they are non-K\"ahler solvmanifolds, never lcK, never balanced, but sometimes admit other special non-\K \ metrics (\cite{ciulicaotimanstanciu}).

    In this short note, we continue the study of Endo-Pajitnov manifolds by computing their Dolbeault cohomology (Section \ref{sec:dolbeault}). Our computation makes explicit use of the fact that Endo-Pajitnov manifolds are odd-dimensional torus bundles over the circle  (\cite[Proposition 2.10]{pajitnov1}) and uses a strong dispersivity property for Cousin groups, much like in \cite{alextoma}. As a corollary, we show that Endo-Pajitnov manifolds satisfy the Hodge decomposition at a dimensional level. In Section \ref{sec:liealg}, we compare the computed cohomology with the invariant one coming from the solvmanifold structure of the manifold. 
	
	\section{Preliminaries}
	\label{sec:prelim}

    \subsection{Endo-Pajitnov manifolds}\label{EP_Subsection}
	We begin by recalling the construction of the Endo-Pajitnov manifolds, following \cite{pajitnov1} and the notations from \cite{ciulicaotimanstanciu}.
	
	Let $n \geq 1$ and let $M \in \textrm{SL}(2n+1, \Z)$ be a matrix whose distinct eigenvalues are $\alpha, \beta_1, \ldots, \beta_k, \overline{\beta_1}, \ldots, \overline{\beta_k}$, where $\alpha>0$, $\alpha\neq 1$ has multiplicity $1$ and $\Im(\beta_j)>0$. 
	
	Denote by $V$ the eigenspace corresponding to $\alpha$ and denote the generalized eigenspaces by
	\[
	W(\beta_j)=\{x \in \C^{2n+1}\ | \ \exists \ N \in \N \text{ such that } (M-\beta_j I)^Nx=0\},
	\]
	Then, with the notations $W=\bigoplus\limits_{j=1}^k W(\beta_j)$ and $\overline W=\bigoplus\limits_{j=1}^k W(\overline{\beta_j})$, we have the decomposition $$\C^{2n+1}=V\oplus W\oplus \overline W.$$
	
	Let $a\in \R^{2n+1}$ be a non-zero eigenvector corresponding to $\alpha$ and let $\{b_1,\ldots,b_n\}$ be a basis of $W$. We write $a=(a^1,a^2,\ldots,a^{2n+1})^T, b_i=(b_i^1,b_i^2,\ldots,b_i^{2n+1})^T, 1\leq i\leq n$. For any $1\leq i\leq 2n+1$, put
	\[
	u_i=(a^i,b_1^i,\ldots,b_n^i)\in \R\times \C^n \simeq \R^{2n+1}.
	\]
	Since $\{a,b_1,\ldots,b_n,\overline{b_1},\ldots,\overline{b_n}\}$ is a basis of $\C^{2n+1}$, the vectors $u_1,\ldots,u_{2n+1}$ are linearly independent over $\R$.
	
	Lastly, let $f_M:W\to W$ be the restriction of the multiplication by $M$ to $W$, and let $R=(r_{ij})$ be the matrix of $f_M$ with respect to the basis $\{b_1,\ldots,b_n\}$. 
	
	Using the above data extracted from the matrix $M$, we define a manifold by considering the action of the automorphisms $g_0,g_1,\ldots,g_{2n+1}:\H\times \C^n\to \H\times \C^n$, given by
	\[
	g_0(w,z)=(\alpha w,R^Tz), \qquad g_i(w,z)=(w,z)+u_i, \quad 1\leq i\leq 2n+1.
	\]
	Note that these are well defined because $\alpha>0$ and the first component of each $u_i$ is real. Let $G_M$ be the subgroup of $\Aut(\H\times \C^n)$ generated by $g_0,g_1,\ldots,g_{2n+1}$. Endo and Pajitnov (\cite{pajitnov1}) proved that $G_M$ acts freely and properly discontinuously on $\H\times \C^n$ and that the quotient
	\[
	T_M:=G_M\backslash (\H\times \C^n)
	\]
	is a compact complex manifold of complex dimension $n+1$, with $\pi_1(T_M)\simeq G_M$. We call manifolds obtained from this construction Endo-Pajitnov manifolds.
	
	\begin{remark}
		\label{rmk:choice_basis}
		It is easy to see that the biholomorphism class of $T_M$ does not depend on the choice of basis $\{b_1,\ldots,b_n\}$ of $W$. Indeed, if $b'_1,\ldots,b'_n$ is obtained from $b_1,\ldots,b_n$ by a matrix $C\in \textrm{GL}(n,\C)$, then the change of coordinates $(w,z)\mapsto (w,C^Tz)$ conjugates the corresponding actions. 
		
		On the other hand, the choice of $a$ \textit{does} change the construction, but in a sense only up to a sign. Indeed, since the $\alpha$-eigenspace is a real line, replacing $a$ by a positive multiple $ca$, $c>0$, gives a biholomorphic manifold by the change of coordinates $(w,z)\mapsto (cw,z)$. 
	\end{remark}
	
	\begin{remark}
		\label{rmk:basic_properties}
		We list below a few additional properties of Endo-Pajitnov manifolds that are known in the literature:
		\begin{itemize}
			\item All $T_M$ manifolds are solvmanifolds (\cite[Theorem 3.1]{ciulicaotimanstanciu});
			\item If $M$ is diagonalizable, then some $T_M$ are biholomorphic to OT manifolds (\cite[Proposition 5.3]{pajitnov1});
			\item If $M$ is not diagonalizable, then $T_M$ cannot be biholomorphic to any OT manifold (\cite[Proposition 5.6]{pajitnov1});
			\item $T_M$ does not admit locally conformally \K \ metrics (\cite[Proposition 4.6]{pajitnov1}, \cite[Proposition 5.5]{ciulicaotimanstanciu}) or balanced metrics (\cite[Proposition 5.3]{ciulicaotimanstanciu}), but always admits locally conformally balanced metrics (\cite[Proposition 5.4]{ciulicaotimanstanciu}). Additionally, if the matrix M has some quantifiable algebraic properties, the corresponding manifold admits pluriclosed and astheno-\K \ metrics (\cite[Theorem 5.6]{ciulicaotimanstanciu}, see also \cite{ciulica1});
			\item Much like for OT manifolds, the existence of complex submanifolds has been studied in \cite{ciulica2}.
		\end{itemize} 
	\end{remark}
	
	\bigskip
	
	For the main theorem in this note, the crucial fact will be that $T_M$ fibers over the circle (\cite[Proposition 2.10]{pajitnov1}), a fibration that  we now make explicit. Define
	\[
	\tau:\H\times \C^n\to \R, \qquad \tau(w,z)=\frac{\log(\Im w)}{\log \alpha}.
	\]
	Since $\tau(g_0(w,z))=\tau(w,z)+1$,	whereas the other $g_i$ ($1\leq i\leq 2n+1$) preserve $\Im w$, the map $\tau$ descends to a smooth map
	\begin{equation}
		\label{eq:fibration}
		\pi:T_M\to S^1=\R/\Z.
	\end{equation}
	This is a fibration over $S^1$ with fiber $\mathbb T^{2n+1}$ and whose monodromy is $M^T$. 
	
%	!!DE MUTAT Notice also the following local feature, which will be important in the Dolbeault computation. If $U\subset S^1$ is a sufficiently small open arc, then $\Im w$ descends to a well-defined smooth function on $\pi^{-1}(U)$. Hence, locally over $S^1$, one has
%	\begin{equation}
%		\label{eq:dbarw_exact}
%		d\overline w=-2i\,\delb(\Im w),
%	\end{equation}
%	so the anti-holomorphic form $d\overline w$ is $\delb$-exact on such open sets.
	
	\medskip
	
	Using the above property, all Betti numbers have been computed in \cite{ciulicaotimanstanciu}. To state that result, we introduce the following notation, which we will use throughout this note:
	
	\begin{definition}
		If $V$ is a vector space and $f:V\to V$ is linear, we denote by
		\[
		\bigwedge\nolimits^k f:\bigwedge\nolimits^k V\to \bigwedge\nolimits^k V
		\]
		the $k$-th exterior power of $f$, \ie
		\[
		\left( \bigwedge\nolimits^k f \right) (v_1\wedge\cdots\wedge v_k)=f(v_1)\wedge\cdots\wedge f(v_k).
		\]
	\end{definition}
	
	Then we have the following:
	
	\begin{theorem}(\cite[Theorem 4.4]{ciulicaotimanstanciu})
		\label{th:derham}
		For any $0\leq k\leq 2n+1$, one has
		\[
		h^k(T_M)=g_{k-1}+g_k,
		\]
		where $g_k$ is the geometric multiplicity of $1$ as an eigenvalue of $\bigwedge\nolimits^k M$ (with the convention that $g_{-1} = 0$).
		
		In particular:
		\begin{itemize}
			\item $h^1(T_M)=1$;
			\item $h^k(T_M)=0$, for all $1<k<2n+1$, for a generic $M$, \ie if no product of some, but not all, eigenvalues of $M$ is equal to $1$.
		\end{itemize}
	\end{theorem}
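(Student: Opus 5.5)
The plan is to use the fibration $\pi:T_M\to S^1$ from \eqref{eq:fibration}, whose fiber is $\mathbb T^{2n+1}$ and whose monodromy is $M^T$ (up to the identification $\pi_1(\mathbb T^{2n+1})\simeq\Z^{2n+1}$ given by the translations $g_1,\ldots,g_{2n+1}$). For a mapping torus over the circle, the Wang sequence (equivalently, the Leray--Serre spectral sequence of $\pi$, which has only two nonzero columns and so degenerates at $E_2$) gives the short exact sequences
\[
0\to \coker\bigl(\phi_{k-1}-\Id\bigr)\to H^k(T_M;\R)\to \Ker\bigl(\phi_k-\Id\bigr)\to 0,
\]
where $\phi_k$ is the map induced by the monodromy on $H^k(\mathbb T^{2n+1};\R)=\bigwedge\nolimits^k(\R^{2n+1})^*$. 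Hence
\[
h^k(T_M)=\dim\Ker(\phi_k-\Id)+\dim\coker(\phi_{k-1}-\Id).
\]

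Next we identify $\phi_k$. Cohomology of the torus is the exterior algebra on $H^1$, so $\phi_k=\bigwedge\nolimits^k\phi_1$. Here $\phi_1$ is the dual (transpose) of the action $M^T$ on $H_1=\Z^{2n+1}$, so $\phi_1$ is conjugate to $M$ or to $M^{-1}$, depending on conventions and on the direction of the monodromy. Both choices give the same answer, since $\bigwedge\nolimits^k M^{-1}=(\bigwedge\nolimits^k M)^{-1}$ has the same $1$-eigenspace as $\bigwedge\nolimits^k M$. Transposition does not change kernel dimensions either. For an endomorphism of a finite-dimensional space, $\dim\coker(A-\Id)=\dim\Ker(A-\Id)$. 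Therefore both terms equal geometric multiplicities of $1$, and we get $h^k=g_k+g_{k-1}$, with $g_{-1}=0$ since $\phi_{-1}$ acts on the zero space.

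For the particular cases: for $k=1$, $M$ has no eigenvalue $1$ (the eigenvalues are $\alpha\ne1$ and the non-real $\beta_j$), so $g_1=0$, while $g_0=1$; hence $h^1=1$. The eigenvalues of $\bigwedge\nolimits^k M$ are the products of $k$ eigenvalues of $M$ counted with multiplicity. Under the genericity hypothesis, for $1\le k\le 2n$ none of these products equals $1$, so $g_k=0$ in that range. Thus $h^k=0$ for $1<k<2n+1$.

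The only step needing care is justifying degeneration and the identification of the monodromy action. Degeneration is automatic because the base is one-dimensional. For the monodromy, I would check it directly: $g_0$ conjugates the translation by $u_i$ to the translation by $g_0(u_i)$, and $(\alpha a^i, R^T b^i)$ is expressed in terms of the $u_j$ through $M$ because $a,b_j$ are eigen- or generalized eigenvectors with $Mb_j=\sum r_{lj}b_l$. The only real obstacle is tracking the transposes, and the argument above shows they do not affect the dimensions.
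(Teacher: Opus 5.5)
Your proposal is correct and follows essentially the route the paper points to. The statement is quoted from \cite{ciulicaotimanstanciu}, where the Betti numbers are obtained from the fibration $\pi:T_M\to S^1$ with fiber $\mathbb T^{2n+1}$ and monodromy $M^T$. Your two-column Leray--Serre/Wang argument (invariants and coinvariants of $\bigwedge\nolimits^k$ of the monodromy, with $\dim\coker=\dim\Ker$) is exactly the topological counterpart of the paper's own proof of Theorem~\ref{th:dolbeault_main}. Your handling of transposes and inverses is sound, as is your treatment of the special cases: $g_0=1$ and $g_1=0$, and $g_j=0$ for $1\le j\le 2n$ under the genericity assumption.
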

	
\subsection{Cousin groups}\label{Cousin_Subsection}
	
	We now turn to the facts about Cousin groups that will be needed later, closely following \cite[Section 2]{alextoma}.
	
	\begin{definition}
		A connected complex Lie group $X$ admitting no non-constant global holomorphic functions is called a Cousin group.
	\end{definition}
	
	\smallskip
	
	By \cite[Proposition 1.1.2]{ak}, a Cousin group of complex dimension $N$ can be written as a quotient $X=\C^N/\Lambda$, where $\Lambda$ is a discrete subgroup of $\C^N$ of rank $N+m$, with $1\leq m\leq N$. 
	
	Moreover (see \cite[Proposition 2]{vogt1}, \cite[Proposition 1]{vogt2}), $\Lambda$ may be assumed to be generated by the columns of a matrix of the form:
	\begin{equation}
		\label{eq:cousin_normal_1}
		P=\begin{pmatrix}
			O_{m,N-m} & T_{m,2m}\\
			I_{N-m} & S_{N-m,2m}
		\end{pmatrix},
	\end{equation}
	where $T_{m,2m}$ is a basis of the lattice of an $m$-dimensional complex torus and $S_{N-m,2m}$ has real entries. One may further arrange this as
	\begin{equation}
		\label{eq:cousin_normal_2}
		P=\begin{pmatrix}
			O_{m,N-m} & I_m & C+i D\\
			I_{N-m} & S_1 & S_2
		\end{pmatrix},
	\end{equation}
	where $C, D$ have real entries and $D$ is invertible. We then say that $P$ is the \textit{period matrix} of $\Lambda$.
	
	\smallskip
	
	The following criterion is essential:
	
	\begin{proposition}(\cite[Proposition 2]{vogt1})
		\label{prop:cousin_condition}
		Suppose that $X=\C^N/\Lambda$, with $\Lambda$ generated by the columns of a matrix $P$ in the normal form \eqref{eq:cousin_normal_1}. Then $X$ is a Cousin group if and only if
		\[
		{}^t\sigma S_{N-m,2m}\notin \Z^{2m}, \qquad \forall \ \sigma\in \Z^{N-m}\setminus\{0\}.
		\]
	\end{proposition}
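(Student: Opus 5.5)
The plan is to reduce everything to Fourier analysis in the directions where $\Lambda$ contains standard basis vectors. We write coordinates on $\C^N$ as $z=(z',z'')$ with $z'\in\C^m$ and $z''\in\C^{N-m}$. Global holomorphic functions on $X$ are exactly the $\Lambda$-invariant holomorphic functions on $\C^N$. By the normal form \eqref{eq:cousin_normal_1}, $\Lambda$ is generated by two families of vectors:
\begin{itemize}
\item the vectors $(0,e_l)$, for $l=1,\ldots,N-m$;
\item the vectors $(t_j,s_j)$, for $j=1,\ldots,2m$, where $t_j$ are the columns of $T_{m,2m}$ (a lattice basis of $\C^m$) and $s_j\in\R^{N-m}$ are the columns of $S_{N-m,2m}$.
\end{itemize}

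For the direction ``$\Leftarrow$'', let $f$ be $\Lambda$-invariant and holomorphic. Invariance under the first family means $f$ is $\Z^{N-m}$-periodic in $z''$. Hence $f$ has a Fourier expansion
\[
f(z',z'')=\sum_{\sigma\in\Z^{N-m}} f_\sigma(z')\,e^{2\pi i\,{}^t\sigma z''},
\]
converging locally uniformly. Each coefficient $f_\sigma(z')=\int_{[0,1]^{N-m}} f(z',x+iy)e^{-2\pi i\,{}^t\sigma (x+iy)}\,dx$ is holomorphic in $z'$. Here one checks, by Cauchy's theorem in the $z''$ variables, that this integral is independent of $y$.

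Invariance under the second family, together with uniqueness of Fourier coefficients (the $s_j$ are real, so shifting by $s_j$ preserves the real torus), gives
\[
f_\sigma(z'+t_j)\,e^{2\pi i\,{}^t\sigma s_j}=f_\sigma(z') .
\]
Since $|e^{2\pi i\,{}^t\sigma s_j}|=1$, the function $|f_\sigma|$ is periodic with respect to the lattice spanned by the $t_j$ in $\C^m$. So $|f_\sigma|$ is bounded on $\C^m$, and $f_\sigma$ is a constant $c_\sigma$ by Liouville. The relation then reads $c_\sigma\,(e^{2\pi i\,{}^t\sigma s_j}-1)=0$ for all $j$.

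If ${}^t\sigma S\notin\Z^{2m}$ for every $\sigma\neq0$, then for such $\sigma$ some factor $e^{2\pi i\,{}^t\sigma s_j}-1$ is nonzero, forcing $c_\sigma=0$. Hence $f=c_0$ is constant, and $X$ is a Cousin group.

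For the direction ``$\Rightarrow$'', argue by contraposition. If ${}^t\sigma S\in\Z^{2m}$ for some $\sigma\neq0$, then $e^{2\pi i\,{}^t\sigma z''}$ is invariant under both families of generators. It is therefore a non-constant holomorphic function on $X$, so $X$ is not Cousin.

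The only delicate point is justifying the Fourier expansion with holomorphic coefficients and the termwise comparison of coefficients. Both are standard, since $f$ is holomorphic and periodic in the real directions of $z''$. Everything else is Liouville's theorem.
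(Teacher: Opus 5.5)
Your proof is correct and complete. The paper gives no argument for this statement; it is simply quoted from Vogt, so there is no internal proof to compare with. Your Fourier-expansion-plus-Liouville argument is a valid self-contained proof of the criterion, and both directions check out, including the explicit invariant function $e^{2\pi i\,{}^t\sigma z''}$ for the converse.

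On the ``delicate point'' you flag: you never need locally uniform convergence of the expansion.

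\begin{itemize}
\item The relation $f_\sigma(z'+t_j)e^{2\pi i\,{}^t\sigma s_j}=f_\sigma(z')$ follows directly from your integral formula for $f_\sigma$. Use $f(z'+t_j,u)=f(z',u-s_j)$ and substitute $x\mapsto x-s_j$. This is legitimate because $s_j$ is real and the integrand is $\Z^{N-m}$-periodic in $x$.
\item The conclusion $f=c_0$ then follows from pointwise Fourier inversion in $x$ at fixed $(z',\Im z'')$. The smooth periodic function $x\mapsto f(z',x+iy)$ has vanishing Fourier coefficients except the zeroth, so it is constant.
\item Alternatively, $f$ descends to $\C^m\times(\C^*)^{N-m}$ via $w_l=e^{2\pi i z''_l}$. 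Your expansion is then just its Laurent expansion in $w$, whose normal convergence is classical.
\end{itemize}
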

	
	\medskip
	
	The starting point of our computation of the Dolbeault cohomology in the next section will be that the Dolbeault cohomology of some open sets in Cousin groups can be explicitly computed under certain conditions. This is the same technique that was used for computing the Dolbeault cohomology of OT manifolds in \cite{alextoma}.
	
	\begin{definition}
		\label{def:strong_disperse}
		Let $\Lambda$ be a discrete subgroup in normal form \eqref{eq:cousin_normal_2}. It is called strongly dispersive if for every $a\in (0,1)$ there exists a constant $C(a)>0$ such that
		\[
		\left\|{}^t\sigma S+{}^t\tau\right\|> C(a)a^{|\sigma|}
		\]
		for all $\sigma\in \Z^{N-m}\setminus\{0\}$ and all $\tau\in \Z^{2m}$, where $S=(S_1 \ S_2)$ is the real block from \eqref{eq:cousin_normal_2}.
	\end{definition}
	
	\begin{proposition}(\cite[Proposition 2.6]{alextoma})
		\label{prop:algebraic_strong_disperse}
		If $\Lambda$ is a discrete subgroup defining a Cousin group and such that all the entries of some period matrix are algebraic numbers, then $\Lambda$ is strongly dispersive.
	\end{proposition}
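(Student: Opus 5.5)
The plan is to prove a Liouville-type lower bound. In fact it will be \emph{polynomial} in $|\sigma|$, which is much stronger than the exponential bound $C(a)a^{|\sigma|}$ required in Definition \ref{def:strong_disperse}.

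\emph{Step 1: reduce to algebraic entries of $S$.} Starting from a period matrix of $\Lambda$ with algebraic entries, I would check that the passage to the normal forms \eqref{eq:cousin_normal_1} and \eqref{eq:cousin_normal_2} preserves algebraicity. This passage consists of a complex linear change of coordinates of $\C^N$, a change of $\Z$-basis of $\Lambda$, and taking real and imaginary parts. The first is a Gaussian elimination whose coefficients are rational expressions in the original entries. Real and imaginary parts of algebraic numbers are algebraic, since $\overline{\mathbb Q}$ is closed under conjugation. Hence all entries $s_{ij}$ of $S=(S_1\ S_2)$ are real algebraic numbers. I would then fix a number field $K\subset\R$ of degree $d$ containing all of them, and an integer $D\geq1$ such that every $Ds_{ij}$ is an algebraic integer.

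\emph{Step 2: a Liouville bound for each coordinate.} Fix $\sigma\in\Z^{N-m}\setminus\{0\}$ and $\tau\in\Z^{2m}$. By Proposition \ref{prop:cousin_condition} (the Cousin condition), ${}^t\sigma S\notin\Z^{2m}$. Hence some coordinate $x_j=\sum_i\sigma_i s_{ij}+\tau_j$ is nonzero. If $|\tau_j|\geq 2\max_{i,j}|s_{ij}|\,|\sigma|+1$, then trivially $|x_j|\geq1$. Otherwise $|\tau_j|\leq c_1|\sigma|$. In that case $Dx_j$ is a nonzero algebraic integer of $K$, so $|N_{K/\Q}(Dx_j)|\geq1$. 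Each of its $d-1$ other Galois conjugates $\sum_i\sigma_i\, D\,\theta(s_{ij})+D\tau_j$ has absolute value at most $c_2|\sigma|$. Combining these gives
\[
|x_j|\geq \frac{1}{D\,(c_2|\sigma|)^{d-1}}.
\]

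\emph{Step 3: conclude.} Thus $\|{}^t\sigma S+{}^t\tau\|\geq c_3|\sigma|^{-(d-1)}$ for all admissible $(\sigma,\tau)$, with $c_3$ independent of $\sigma$ and $\tau$. For any $a\in(0,1)$ we have $|\sigma|^{d-1}a^{|\sigma|}\leq C'(a)<\infty$. Setting $C(a)=c_3/(2C'(a))$ gives the strict inequality of Definition \ref{def:strong_disperse}.

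The only delicate point is Step 1: verifying carefully that the Vogt normalization, which involves choosing a lattice basis adapted to the maximal complex subspace, can be carried out without leaving $\overline{\mathbb Q}$. I would handle this by noting that every step is either a $\Z$-unimodular operation on columns or solving linear systems whose coefficients lie in $\overline{\mathbb Q}$. The Diophantine part in Steps 2 and 3 is routine.
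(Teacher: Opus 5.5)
Your proof is correct. The paper itself gives no argument for this statement. It is quoted from Otiman--Toma and used only as a black box at the end of Lemma~\ref{lem:cousin_strong}, so your proof replaces a citation rather than competing with an internal proof.

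Your route is the classical Liouville inequality:
\begin{itemize}
\item after the easy case of large $|\tau_j|$, $Dx_j$ is a nonzero algebraic integer in a real number field $K$ of degree $d$;
\item its $d-1$ other conjugates are $O(|\sigma|)$;
\item $|N_{K/\Q}(Dx_j)|\ge 1$ then forces $|x_j|\ge c\,|\sigma|^{-(d-1)}$, which dominates $C(a)a^{|\sigma|}$ for every $a\in(0,1)$.
\end{itemize}
Compared with the citation, this costs almost nothing and gives an explicit, much stronger polynomial bound whose exponent is controlled by $[K:\Q]$.

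Two minor points.

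First, the constant $2$ in your easy case is fine if $|\sigma|$ is the $\ell^1$-norm. If $|\sigma|$ means $\max_i|\sigma_i|$, replace it by a dimension constant such as $2(N-m)$; nothing else changes.

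Second, in this paper a \emph{period matrix} is by definition already in the normal form \eqref{eq:cousin_normal_2}. So your Step~1, which you flag as the delicate part, is unnecessary: the entries of $S$ are algebraic by hypothesis. This is how Lemma~\ref{lem:cousin_strong} applies the proposition, after normalising the Endo--Pajitnov lattice by hand.

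If you want the broader reading, your sketch works and is simpler than you expect. No unimodular change is needed, only a reordering of the $\Z$-basis so that the first $N-m$ vectors are independent modulo $L=\R\Lambda\cap i\R\Lambda$. That subspace is cut out by linear equations with real algebraic coefficients, so it is spanned by algebraic vectors.
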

	
	\bigskip
	
	Finally, we recall the Dolbeault cohomology computation for convex domains in strongly dispersive Cousin groups, due to Otiman and Toma:
	
	\begin{theorem}(\cite[Theorem 3.1]{alextoma})
		\label{th:otiman_toma_convex}
		Let $U$ be a domain of a Cousin group $X=\C^N/\Lambda$, whose inverse image $\widetilde U$ in $\C^N$ is a convex domain. Assume that the period matrix is in the normal form \eqref{eq:cousin_normal_2} and that $\Lambda$ is strongly dispersive. Then $H^q(U,\Omega^p)$ is finite dimensional and
		\[
		\{[dz_I\wedge d\overline z_J]\ |\ I\subset \{1,\ldots,N\}, \ J\subset \{1,\ldots,m\}, \ |I|=p, \ |J|=q\}
		\]
		is a basis. In particular $\dim_\C H^q(U,\Omega^p)=\binom{N}{p}\binom{m}{q}$.
	\end{theorem}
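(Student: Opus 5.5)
We would follow the Fourier-analytic strategy that is standard for tube-type domains in Cousin groups. The plan has four steps.

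\textbf{Step 1: reduce to $p=0$ and describe $\widetilde U$.} Since $\Omega^p_U$ is holomorphically trivialized by the $\Lambda$-invariant forms $dz_I$, $|I|=p$, we have $H^q(U,\Omega^p)\cong \bigoplus_{|I|=p} H^q(U,\mathcal O)\, dz_I$. It therefore suffices to show that $\{[d\overline z_J]\ |\ J\subset\{1,\ldots,m\}, \ |J|=q\}$ is a basis of $H^q(U,\mathcal O)$.

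Next, $\widetilde U$ is convex and $\Lambda$-invariant, so it is invariant under the real span $\Lambda_\R$. This is a real subspace of dimension $N+m$. In the normal form \eqref{eq:cousin_normal_2} it is spanned by $\R^N\subset\C^N$ together with the imaginary directions $i\,\R^m$ coming from the block $C+iD$, using that $D$ is invertible. Consequently
\[
\widetilde U=\Lambda_\R+\omega, \qquad \omega\subset \C^N/\Lambda_\R\simeq \R^{N-m} \text{ convex},
\]
with $\R^{N-m}$ parametrized by $y''=\Im(z_{m+1},\ldots,z_N)$. Thus $U$ is a tube $K\times\omega$ over the real torus $K=\Lambda_\R/\Lambda\simeq \mathbb T^{N+m}$.

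\textbf{Step 2: Fourier decomposition.} We represent a Dolbeault class by a smooth $\delb$-closed $(0,q)$-form $\varphi$ on $\widetilde U$ that is $\Lambda$-periodic. We expand its coefficients in Fourier series along $K$, indexing characters by the dual lattice. After the change of coordinates given by $P$, the characters are parametrized by pairs $(\sigma,\tau)\in\Z^{N-m}\times\Z^{2m}$. The $\delb$-operator preserves each Fourier mode, so $\delb\varphi=0$ splits into a family of equations, one for each mode.

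\textbf{Step 3: the individual modes.} For the trivial character the coefficients depend only on $y''\in\omega$. The equations then reduce to a $\delb$-problem on the tube $\R^{N-m}+i\omega$, which is a convex domain in $\C^{N-m}$ and hence has no higher cohomology, together with the constant-coefficient forms in the $m$ compact anti-holomorphic directions. This zero mode produces exactly the classes $d\overline z_J$, $J\subset\{1,\ldots,m\}$. For a nontrivial character, the equation in the $\delb$-directions along the compact torus is solved algebraically by dividing by the quantity ${}^t\sigma S+{}^t\tau$, which is nonzero because of Proposition~\ref{prop:cousin_condition}. This gives a Koszul-type homotopy showing that every nonzero mode is exact, with an explicit primitive.

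\textbf{Step 4: convergence (main obstacle).} The primitive must be reassembled into a smooth form on $\widetilde U$. On the tube the modes of a smooth function decay, but the natural weights on the base contain factors like $e^{c|\sigma|}$ coming from $\omega$. Meanwhile the small divisors $\|{}^t\sigma S+{}^t\tau\|$ can be tiny. Here strong dispersivity (Definition~\ref{def:strong_disperse}) is used: on compact convex subsets of $\omega$, for any $a\in(0,1)$ the loss $C(a)^{-1}a^{-|\sigma|}$ is absorbed by the decay of the Fourier coefficients. This yields convergence in the $C^\infty$ topology on every compact subset of $\widetilde U$. I expect this estimate, done uniformly over an exhaustion of $\omega$ by compact convex sets, to be the delicate point.

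\textbf{Conclusion.} Finally, independence of the classes $[d\overline z_J]$: if $d\overline z_J$ were $\delb$-exact, then taking the zero Fourier mode of the primitive would give an exact invariant form in the zero-mode complex, which Step 3 rules out. This gives the basis and hence $\dim_\C H^q(U,\Omega^p)=\binom Np\binom mq$.
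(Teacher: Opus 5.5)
The paper does not prove this statement (it is quoted from Otiman--Toma), so I judge your argument on its own.

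\textbf{What is correct.} Steps 1--3 are correct. First, $\widetilde U=\Lambda_\R+\omega$. Second, on the mode $e_\xi$, $\xi=(\sigma,\tau)$, the operator $\delb$ is the sum of two pieces: a constant Koszul operator $c(\xi)\wedge\cdot$ in $d\overline z_1,\ldots,d\overline z_m$, with $|c(\xi)|$ comparable to $\|{}^t\sigma S+{}^t\tau\|$ up to the sign of $\sigma$; and a twisted de Rham operator $\delta_R$ (essentially $e^{-2\pi\langle\sigma,y''\rangle}\,d_{y''}\,e^{2\pi\langle\sigma,y''\rangle}$) in the $d\overline z''$-directions. Third, $h=\iota_{\overline c/|c|^2}$ anticommutes with $\delta_R$, so $h\delb+\delb h=\Id$ on each nonzero mode. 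For the zero mode, what you really use is the Poincar\'e lemma on the convex set $\omega$, not Dolbeault vanishing on the tube.

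\textbf{The gap.} The gap is Step 4, and it is not just a delicate estimate: the absorption you invoke is not available. Fourier coefficients of a smooth periodic form decay faster than every power of $|\xi|$, but in general not exponentially. Strong dispersivity only gives $|c(\xi)|^{-1}\le C(a)^{-1}a^{-|\sigma|}$ (up to constants), which may grow faster than every polynomial. Hence $\sum_{\xi\neq0}(h\varphi_\xi)e_\xi$ can diverge even for exact $\varphi$. For example, take $m=1$ and $\varphi=\delb g$ with $g=\sum_\xi g_\xi e_\xi\,d\overline z_1$, $g_\xi\in\C$. One finds $h\varphi_\xi=-c_1(\xi)^{-1}\delta_R g_\xi$, of size $\pi|\sigma||g_\xi|/|c_1(\xi)|$. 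Nothing prevents $|g_\xi|\approx e^{-|\xi|^{1/4}}$ on a sequence of modes with $|c_1(\xi)|\approx e^{-|\sigma|^{1/2}}$. The weights $e^{\pm2\pi\langle\sigma,y''\rangle}$ never enter your Koszul primitive. Had you solved the $\delta_R$-part mode by mode instead, they would be a further loss, not a gain.

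\textbf{The missing step.} What is missing is a preliminary reduction that creates exponential decay.
\begin{enumerate}
\item The first $m$ coordinates give $\rho:U\to Y:=\C^m/(\Z^m+(C+iD)\Z^m)$. Its fibres $(\R^{N-m}+i\omega)/\Z^{N-m}$ are logarithmically convex Reinhardt domains, hence Stein.
\item Thus $R^q\rho_*\mathcal O_U=0$ for $q>0$, and $H^q(U,\mathcal O)\cong H^q(Y,\rho_*\mathcal O_U)$. Compute the right side by the Dolbeault complex in $z'$ with fibre-holomorphic coefficients. Then every class has a representative with only $d\overline z_1,\ldots,d\overline z_m$ components and coefficients $f=\sum_\sigma f_\sigma(z')e^{2\pi i\langle\sigma,z''\rangle}$ holomorphic in $z''$.
\item Cauchy's formula on the open tube then gives $|f_\sigma(z')|e^{-2\pi\langle\sigma,y''\rangle}\le B_Le^{-\varepsilon_L|\sigma|}$, together with all $z'$-derivatives. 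This holds for $y''$ in a compact $L\subset\omega$ and any $0<\varepsilon_L<2\pi\,\mathrm{dist}(L,\partial\omega)$.
\item On such representatives $\delta_R$ kills every mode, so your $h$ applies verbatim. The primitive converges over $L$ as soon as $a\in(e^{-\varepsilon_L},1)$.
\end{enumerate}
Since $\varepsilon_L$ must shrink as $L$ approaches $\partial\omega$, this is exactly why strong dispersivity is required for every $a\in(0,1)$. Your independence argument is fine.

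\textbf{A remark on the algebraic case.} For algebraic period matrices (the only case used in this paper), a Liouville-type norm argument in the number field generated by the entries of $S$ gives $\|{}^t\sigma S+{}^t\tau\|\ge\kappa|\sigma|^{-k}$. Then the rapid decay of smooth Fourier coefficients does absorb $|c(\xi)|^{-1}$, so your argument goes through without the reduction. Under strong dispersivity alone, which is the hypothesis of the statement, it does not.
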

	
	\section{Dolbeault cohomology of Endo-Pajitnov manifolds}
	\label{sec:dolbeault}
	\subsection{Preliminary steps}
	We can now turn to the main result of this note. The plan is to use the Leray spectral sequence for the fibration \eqref{eq:fibration}, using \ref{th:otiman_toma_convex} to compute the cohomology of an induced local system over $S^1$. Owing to the small dimension of the circle, this spectral sequence will degenerate quickly yielding a short exact sequence which contains the desired Dolbeault cohomology of $T_M$.
	
	With the notations from Section \ref{sec:prelim}, let
	\[
	\Lambda:=\langle u_1,\ldots,u_{2n+1}\rangle_\Z\subset \C^{n+1}
	\]
	be the translation lattice generated by $g_1,\ldots,g_{2n+1}$. Since the vectors $u_1,\ldots,u_{2n+1}$ are linearly independent over $\R$, $\Lambda$ has rank $2n+1$. Moreover, $\operatorname{span}_\R \Lambda=\R\times \C^n\subset \C\times \C^n$. Therefore the maximal complex subspace contained in $\operatorname{span}_\R\Lambda$ is
	\begin{equation}
		\label{eq:max_complex_subspace}
		L=\{0\}\times \C^n.
	\end{equation}
	
%	This is the reason why, in the local Dolbeault cohomology calculation, only $d\overline z_1,\ldots,d\overline z_n$ appear in the anti-holomorphic directions, and not $d\overline w$.
	
	\smallskip
	
	We now consider the complex Lie group
	\[
	X_\Lambda:=\C^{n+1}/\Lambda.
	\]
	
	\begin{lemma}
		\label{lem:cousin_strong}
		$X_\Lambda$ is a Cousin group and $\Lambda$ is strongly dispersive.
	\end{lemma}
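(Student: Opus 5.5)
The lemma has two parts. For the Cousin property I plan to reduce to linear algebra on the eigenspaces of $M$, using the arithmetic of $\alpha$. For strong dispersivity I plan to invoke Proposition \ref{prop:algebraic_strong_disperse} after checking that the period matrix can be taken with algebraic entries.

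\emph{Cousin property.} I would use the standard characterization of when a quotient $\C^{N}/\Lambda$ carries non-constant holomorphic functions. Expanding a $\Lambda$-invariant holomorphic function in Fourier series along $\operatorname{span}_\R\Lambda$ shows that such a function exists if and only if there is a non-zero $\C$-linear functional $\ell$ on $\C^{N}$ with $\ell(\Lambda)\subset\Z$; one direction is given by $\exp(2\pi i \ell)$. In the normal form \eqref{eq:cousin_normal_1}, this is exactly the condition of Proposition \ref{prop:cousin_condition}. So I would take $\ell(w,z)=cw+\sum_j d_jz_j$ with $\ell(u_i)\in\Z$ for all $i$, and aim to prove $\ell=0$.

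Since $\ell(u_i)=c\,a^i+\sum_j d_j b_j^i$ is the $i$-th coordinate of
\[
v:=c\,a+\sum_j d_j b_j\in V\oplus W,
\]
the hypothesis says $v\in\Z^{2n+1}$. Because $v$ is real, also $v=\overline v\in V\oplus\overline W$, using that $a$ is real. Hence $v\in V$, since $(V\oplus W)\cap(V\oplus\overline W)=V$ by the direct sum decomposition, and so $v=c\,a$.

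Suppose $v\neq0$. Then $v$ is a non-zero integer eigenvector for $\alpha$, so $\alpha=(Mv)^i/v^i$ is rational. As a root of the monic integer characteristic polynomial, $\alpha$ is then an integer. It must divide the constant term $\pm\det M=\pm1$, so $\alpha=1$ because $\alpha>0$, contradicting $\alpha\neq1$. Thus $v=0$. Linear independence of $a,b_1,\dots,b_n$ then forces $c=d_j=0$, i.e.\ $\ell=0$, and $X_\Lambda$ is a Cousin group.

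\emph{Strong dispersivity.} $M$ has integer entries, so $\alpha$ and the $\beta_j$ are algebraic. Solving $(M-\alpha I)x=0$ and $(M-\beta_jI)^{N}x=0$ by Gaussian elimination over $\overline{\Q}$ produces an eigenvector $a$ and a basis $b_1,\dots,b_n$ of $W$ with algebraic entries. We may fix these choices: by Remark \ref{rmk:choice_basis} the choice of basis of $W$ is irrelevant, and rescaling $a$ by a positive scalar only changes $\Lambda$ by the $\C$-linear map $(w,z)\mapsto(cw,z)$. Then every $u_i$ has algebraic coordinates.

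Bringing the generator matrix into the normal form \eqref{eq:cousin_normal_2} involves only two operations: a $\C$-linear change of coordinates built from the entries themselves (inverting minors, taking real and imaginary parts), and an integral change of basis of $\Lambda$. Both preserve algebraicity, since $\overline{\Q}$ is a field closed under complex conjugation. Proposition \ref{prop:algebraic_strong_disperse} then applies.

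The main obstacle I expect is justifying cleanly that the normalization to \eqref{eq:cousin_normal_2} keeps entries algebraic and remains compatible with the Cousin criterion. If needed, I would verify it by writing the coordinate change explicitly: split $w$ off as the real direction, so that $m=n$ and $L=\{0\}\times\C^n$ as in \eqref{eq:max_complex_subspace}. The alternative route is the cited equivalence between the Fourier characterization and Proposition \ref{prop:cousin_condition}.
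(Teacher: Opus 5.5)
Your proof is correct. For strong dispersivity it is the paper's argument: choose $a$ and $b_1,\ldots,b_n$ with algebraic entries (justified by Remark \ref{rmk:choice_basis}), normalize, and invoke Proposition \ref{prop:algebraic_strong_disperse}.

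For the Cousin property your route differs from the paper's.

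\emph{The paper's route.} It first performs the explicit normalization $w'=w/a^1$, $z'_k=z_k-(b_k^1/a^1)w$, followed by an algebraic change of the $z'$-coordinates. It then observes that the real row $(1,S_1,S_2)$ consists exactly of the ratios $a^\ell/a^1$. Finally it applies Vogt's criterion (Proposition \ref{prop:cousin_condition}): if some nonzero integer multiple of $(S_1,S_2)$ were integral, $a$ would be proportional to a rational eigenvector.

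\emph{Your route.} You use the intrinsic characterization: there is no nonzero $\C$-linear $\ell$ with $\ell(\Lambda)\subset\Z$. You identify $(\ell(u_i))_i$ with the coordinates of $v=ca+\sum_j d_jb_j\in V\oplus W$. Reality of integer vectors, together with $\C^{2n+1}=V\oplus W\oplus\overline W$, forces $v\in V$.

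Both arguments end with the same arithmetic fact: $\alpha\neq\pm1$ admits no rational eigenvector.

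\emph{Trade-offs.} Your version is coordinate-free and needs no algebraic choices. It also explains why the complex eigen-directions play no role. The price is that you rely on the Fourier-series characterization of Cousin groups. That characterization is standard and, as you note, equivalent to Vogt's criterion in normal form. You also still need the normal form for the dispersivity step, since Definition \ref{def:strong_disperse} is phrased in terms of it. The paper's single explicit normalization does double duty, serving both the Cousin check and algebraicity.

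When you write up the normalization, do it explicitly as the paper does rather than through the general recipe. Also record why $D=\Im Z$ is invertible. The columns of $P$ are $\R$-linearly independent and lie in $\C^n\times\R$, so they span it over $\R$. The first column spans the real $w'$-direction, so the remaining $2n$ top parts, the columns of $(I_n\ Z)$, form an $\R$-basis of $\C^n$. That is exactly the invertibility of $D$.
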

	
	\begin{proof}
		Since the matrix $M$ has integer coefficients, all its eigenvalues are algebraic and all generalized eigenspaces are defined over a finite algebraic extension of $\Q$. More explicitly, if $K$ is the splitting field for the characteristic polynomial of $M$, then each generalized eigenspace $\Ker(M-\lambda I)^N$ is the solution space of a linear system with coefficients in $K$. Therefore we may choose the basis $b_1,\ldots,b_n$ of $W$ with algebraic coordinates. By \ref{rmk:choice_basis}, this does not change the biholomorphism class of $T_M$.
		
		Similarly, the real eigenvalue $\alpha$ admits a non-zero algebraic eigenvector. Since changing $a$ by a positive scalar does not change the biholomorphism class of $T_M$, again by \ref{rmk:choice_basis}, we may also suppose that $a$ has algebraic coordinates. Thus the matrix
		\begin{equation*}
			\begin{pmatrix}
				a^1 & \cdots & a^{2n+1}\\
				b_1^1 & \cdots & b_1^{2n+1}\\
				\vdots & & \vdots\\
				b_n^1 & \cdots & b_n^{2n+1}
			\end{pmatrix}
		\end{equation*}
		has algebraic entries.
		
		We now put this matrix into a normal form. After reordering the vectors $u_i$, we may suppose that $a^1\neq 0$. Make the following linear change of coordinates:
		\[
		w'=\frac{w}{a^1}, \qquad z'_k=z_k-\frac{b_k^1}{a^1}w, \quad 1\leq k\leq n.
		\]
		Then $u_1$ goes to $(1,0,\ldots,0)$ and, for $2\leq \ell\leq 2n+1$, the vector $u_\ell$ goes to
		\[
		\left(\frac{a^\ell}{a^1}, b_1^\ell-b_1^1\frac{a^\ell}{a^1}, \ldots, b_n^\ell-b_n^1\frac{a^\ell}{a^1}\right).
		\]
		All entries remain algebraic. The $z'$-components of the remaining $2n$ period vectors span $\C^n$ over $\R$; hence we may choose $n$ of them which form a complex basis of $\C^n$. Applying the inverse of the corresponding algebraic $n\times n$ matrix to the $z'$-coordinates, and reordering columns, we obtain a matrix of the form
		\begin{equation}
			\label{eq:period_matrix_EP_normal}
			P=\begin{pmatrix}
				0 & I_n & Z\\
				1 & S_1 & S_2
			\end{pmatrix},
		\end{equation}
		with the convention of placing the $w$-row as the last one. Moreover, writing $Z=C+iD$, the matrix $D$ is invertible. Thus $P$ is a period matrix as in \eqref{eq:cousin_normal_2}.
		
		We next check the Cousin condition. By \ref{prop:cousin_condition}, this is equivalent to
		\[
		k(S_1, S_2)\notin \Z^{2n}, \qquad \forall \ k\in \Z\setminus\{0\}.
		\]
		If not, then all entries of $(S_1,S_2)$ would be rational. However, by the construction of the matrix $P$, the bottom row in \eqref{eq:period_matrix_EP_normal} consists precisely of the ratios $a^\ell/a^1$. Since $a$ is an eigenvector of $\alpha$, it follows that, after rescaling, there would exist a non-zero vector $q\in \Q^{2n+1}$ such that $Mq=\alpha q$.
		Choosing an index $j$ with $q_j\neq 0$, we obtain
		\[
		\alpha=\frac{(Mq)_j}{q_j}\in \Q.
		\]
		But $\alpha$ is an algebraic integer, hence $\alpha\in \Z$. Since $M\in \textrm{SL}(2n+1,\Z)$, this forces $\alpha = \pm 1$, in contradiction with $\alpha>0$ and $\alpha\neq 1$. Therefore $X_\Lambda$ is a Cousin group.
		
		Finally, the period matrix \eqref{eq:period_matrix_EP_normal} has algebraic entries, so  \ref{prop:algebraic_strong_disperse} implies that $\Lambda$ is strongly dispersive.
	\end{proof}
	
\subsection{The main result}
	
	We are ready to state and prove the main theorem. Let
	\[
	E:=\C\langle dw,dz_1,\ldots,dz_n\rangle, \qquad F:=\C\langle d\overline z_1,\ldots,d\overline z_n\rangle.
	\]
	Define
	\[
	A:=\begin{pmatrix}
		\alpha & 0\\
		0 & R^T
	\end{pmatrix}\in \End(E), \qquad B:=\overline{R^T}\in \End(F).
	\]
	For $0\leq p\leq n+1$ and $0\leq j\leq n$, put
	\[
	V_{p,j}:=\bigwedge\nolimits^p E\otimes \bigwedge\nolimits^j F, \qquad T_{p,j}:=\bigwedge\nolimits^p A\otimes \bigwedge\nolimits^j B\in \End(V_{p,j}).
	\]
	Finally, set
	\begin{equation*}
		\label{eq:rpj_def}
		r_{p,j}:=\dim_\C \Ker(T_{p,j}-\Id),
	\end{equation*}
	with the convention that $r_{p,j}=0$ if $j<0$ or $j>n$.
	
	\begin{theorem}
		\label{th:dolbeault_main}
		The Hodge numbers of $T_M$ are given by the formula:
		\begin{equation}
			\label{eq:main_hodge_formula}
			h^{p,q}_{\delb}(T_M)=r_{p,q}+r_{p,q-1}, \ \text{for all} \ \ 0 \le p, q \le n + 1.
		\end{equation}
		More precisely, there is a natural short exact sequence
		\begin{equation}
			\label{eq:main_exact_sequence}
			0\to \coker(T_{p,q-1}-\Id)\to H^q(T_M,\Omega^p)\to \Ker(T_{p,q}-\Id)\to 0.
		\end{equation}
	\end{theorem}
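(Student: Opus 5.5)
We will apply the Leray spectral sequence to the fibration $\pi:T_M\to S^1$ from \eqref{eq:fibration} and to the sheaf $\Omega^p$. Thus $E_2^{s,t}=H^s(S^1,R^t\pi_*\Omega^p)\Rightarrow H^{s+t}(T_M,\Omega^p)$. Since $S^1$ has cohomological dimension $1$ for locally constant sheaves, only $s=0,1$ occur. The sequence then degenerates at $E_2$ and gives
\[
0\to H^1(S^1,R^{q-1}\pi_*\Omega^p)\to H^q(T_M,\Omega^p)\to H^0(S^1,R^q\pi_*\Omega^p)\to 0 .
\]
So the task reduces to identifying the higher direct images $R^t\pi_*\Omega^p$ as a local system on $S^1$ and computing its monodromy.

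First we compute the stalks. Take a small open arc $I\subset S^1$ whose preimage in $\R$ we call $\widetilde I$. Then $\pi^{-1}(I)\cong \Gamma\backslash \{(w,z): \log(\Im w)/\log\alpha\in \widetilde I\}$, where $\Gamma\cong \Lambda$ is the group of translations. This is exactly a domain $U$ in the Cousin group $X_\Lambda$. Its preimage in $\C^{n+1}$ is a horizontal strip in $w$ times $\C^n$, which is convex. By Lemma \ref{lem:cousin_strong}, $\Lambda$ is strongly dispersive and has the normal form \eqref{eq:period_matrix_EP_normal} with $m=n$. Theorem \ref{th:otiman_toma_convex} therefore gives $H^t(\pi^{-1}(I),\Omega^p)$ with basis $[dz_I\wedge d\overline z_J]$, where $|I|=p$ and $J$ ranges over $t$-subsets of the $n$ directions in which the period matrix has its torus part.

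We then need to identify these anti-holomorphic classes with $\bigwedge^t F$, using the original coordinates $d\overline z_1,\dots,d\overline z_n$. The normalizing change of coordinates $w'=w/a^1$, $z'=z-(b^1/a^1)w$ has algebraic entries. Under it, $d\overline z'_k$ differs from a combination of $d\overline z_k$ by multiples of $d\overline w$. On these open sets $d\overline w=-2i\,\delb(\Im w)$ is $\delb$-exact, and the same holds after wedging with closed invariant forms. Hence the classes are naturally $\bigwedge^pE\otimes\bigwedge^tF=V_{p,t}$. The sets $\pi^{-1}(I)$ for a basis of arcs are all of this type, so the restriction maps between two such arcs are isomorphisms on these invariant bases. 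Consequently $R^t\pi_*\Omega^p$ is a local system with fibre $V_{p,t}$.

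The monodromy comes from $g_0$, which sends $\pi^{-1}(I)$ to $\pi^{-1}(I+1)$. It pulls back $dw\mapsto\alpha\,dw$, $dz\mapsto R^Tdz$ and $d\overline z\mapsto\overline{R^T}d\overline z$, so the monodromy is $T_{p,t}$, up to inversion or transpose conventions, which do not change the dimensions of kernels and cokernels. For a local system on $S^1$ with monodromy $T$ one has $H^0=\Ker(T-\Id)$ and $H^1=\coker(T-\Id)$. Substituting these into the short exact sequence gives \eqref{eq:main_exact_sequence}. Since $\dim\coker=\dim\Ker$ for an endomorphism of a finite-dimensional space, \eqref{eq:main_hodge_formula} follows; the convention $r_{p,j}=0$ covers $q=0$ and $q=n+1$.

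The main obstacle is the stalk identification. One must justify that $R^t\pi_*\Omega^p$ is really locally constant, with restriction maps that are isomorphisms. One must also check that the basis from Theorem \ref{th:otiman_toma_convex}, which is written in the normalized coordinates, matches $\bigwedge^pE\otimes\bigwedge^tF$ in the original coordinates, in a way that is compatible with the action of $g_0$. I would first handle this by showing directly that $d\overline w$ is $\delb$-exact on each $\pi^{-1}(I)$, and then using invariance under translations.
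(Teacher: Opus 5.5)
Your proposal is correct and follows essentially the same route as the paper. The steps match: the Leray spectral sequence for $\pi:T_M\to S^1$; the Otiman--Toma computation on the convex preimages of small arcs (via Lemma~\ref{lem:cousin_strong}); identifying $R^t\pi_*\Omega^p$ with a local system of fibre $V_{p,t}$, using that $d\overline w=-2i\,\delb(\Im w)$ is $\delb$-exact there; the monodromy given by $g_0^*$; and $H^0=\Ker(T-\Id)$, $H^1=\coker(T-\Id)$ on $S^1$. Your caveat about inversion conventions is harmless even for the exact sequence itself, not just for dimensions, since $T^{-1}-\Id=-T^{-1}(T-\Id)$ has the same kernel and the same image as $T-\Id$.
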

	
	\begin{proof}
		Let $U\subset S^1$ be a sufficiently small open arc and let $I\subset \R$ be an interval mapping to $U$. From the definition of $\pi$ in \eqref{eq:fibration}, we have
		\[
		\pi^{-1}(U)\simeq D_I/\Lambda,
		\]
		where
		\[
		D_I=\left\{(w,z)\in \H\times \C^n \ \middle| \ \frac{\log(\Im w)}{\log \alpha}\in I\right\},
		\]
		which is clearly a convex domain in $\C^{n+1}$. 
		
		By \ref{lem:cousin_strong}, the lattice $\Lambda$ is strongly dispersive. Therefore \ref{th:otiman_toma_convex} applies for the open set $\pi^{-1}(U) \subset X_\Lambda$. Although that theorem is stated after putting the period matrix in normal form, the statement is invariant under the complex-linear changes of coordinates we used to reach that form. Returning to the original coordinates, the anti-holomorphic directions are precisely those of the maximal complex subspace \eqref{eq:max_complex_subspace}, of codimension $1$. Hence we obtain
		\begin{equation}
			\label{eq:local_cohomology}
			H^j(\pi^{-1}(U),\Omega^p)\simeq \bigwedge\nolimits^p E\otimes \bigwedge\nolimits^j F,
		\end{equation}
		and the cohomology is generated by the classes
		\[
		[d\zeta_{i_1}\wedge\cdots\wedge d\zeta_{i_p}\wedge d\overline z_{j_1}\wedge\cdots\wedge d\overline z_{j_j}],
		\]
		where $\zeta_0=w$ and $\zeta_i=z_i$ ($1\leq i\leq n$), with $0\leq i_1<\cdots<i_p\leq n$ and $1\leq j_1<\cdots<j_j\leq n$.
		
		Note that the absence of $d\overline w$ from the anti-holomorphic part is precisely due to \eqref{eq:max_complex_subspace}. Equivalently, locally over $U$, the function $\Im w$ descends to $\pi^{-1}(U)$ and $d\overline w=-2i\,\delb(\Im w)$, so $d\overline w$ is $\delb$-exact locally over the base.
		
		Since \eqref{eq:local_cohomology} takes place for any sufficiently small $U$, and due to the  fact that restrictions preserve the above constant-form basis, the higher direct image sheaves
		\[
		\mathcal L_{p,j}:=R^j\pi_*\Omega^p
		\]
		are local systems on $S^1$, with fiber
		\[
		V_{p,j}=\bigwedge\nolimits^p E\otimes \bigwedge\nolimits^j F.
		\]
		
		We now compute their monodromy. We use the convention that the positive generator of $\pi_1(S^1)$ acts on the fiber by pullback (as opposed to pushforward) with $g_0$. Since
		\[
		g_0^*dw=\alpha dw, \qquad g_0^*\begin{pmatrix}dz_1\\ \vdots\\ dz_n\end{pmatrix}=R^T\begin{pmatrix}dz_1\\ \vdots\\ dz_n\end{pmatrix},
		\]
		the induced action on $E$ is
		\[
		A=\begin{pmatrix}
			\alpha & 0\\
			0 & R^T
		\end{pmatrix}.
		\]
		Similarly,
		\[
		g_0^*\begin{pmatrix}d\overline z_1\\ \vdots\\ d\overline z_n\end{pmatrix}=\overline{R^T}\begin{pmatrix}d\overline z_1\\ \vdots\\ d\overline z_n\end{pmatrix},
		\]
		so the induced action on $F$ is $B=\overline{R^T}$. Therefore the monodromy of $\mathcal L_{p,j}$ is given by
		\[
		T_{p,j}=\bigwedge\nolimits^p A\otimes \bigwedge\nolimits^j B.
		\]		
		We now apply the Leray spectral sequence for $\pi:T_M\to S^1$ and the sheaf $\Omega^p$:
		\[
		E_2^{a,b}=H^a(S^1,R^b\pi_*\Omega^p)=H^a(S^1,\mathcal L_{p,b})\Longrightarrow H^{a+b}(T_M,\Omega^p).
		\]
		Since the base is $S^1$, only $a=0,1$ can occur. Thus there are no non-zero differentials and the spectral sequence degenerates at $E_2$. For every $q$, we get a short exact sequence
		\begin{equation}
			\label{eq:leray_short_exact}
			0\to H^1(S^1,\mathcal L_{p,q-1})\to H^q(T_M,\Omega^p)\to H^0(S^1,\mathcal L_{p,q})\to 0.
		\end{equation}
		Generally, if $\mathcal L_T$ is a local system on $S^1$ with fiber $V$ and monodromy $T$, then
		\[
		H^0(S^1,\mathcal L_T)=\Ker(T-\Id), \qquad H^1(S^1,\mathcal L_T)=\coker(T-\Id).
		\]
		Applying this to \eqref{eq:leray_short_exact}, we obtain exactly \eqref{eq:main_exact_sequence}. Taking dimensions, and using that
		\[
		\dim_\C\coker(T_{p,q-1}-\Id)=\dim_\C\Ker(T_{p,q-1}-\Id)
		\]
		for an endomorphism of a finite dimensional vector space, gives
		\[
		h^{p,q}_{\delb}(T_M)=r_{p,q}+r_{p,q-1},
		\]
		as claimed.
	\end{proof}
	
	\begin{remark}
		\label{rmk:inoue_numbers}
		For $n=1$, the construction gives the usual Inoue surfaces of type $S_M$ (\cite{inoue}). In this case $R=(\beta)$ and $\alpha\beta\overline\beta=1$. The only non-zero numbers $r_{p,j}$ are
		\[
		r_{0,0}=1, \qquad r_{2,1}=1.
		\]
		\ref{th:dolbeault_main} therefore gives
		\[
		h^{0,0}=h^{0,1}=h^{2,1}=h^{2,2}=1,
		\]
		and all other Hodge numbers vanish. This recovers the classical Dolbeault cohomology of Inoue surfaces.
	\end{remark}
	
	\medskip
	
	Although we never assumed $M$ to be diagonalizable in our proof, as is the case for de Rham cohomology (\cite[Corollary 4.5]{ciulicaotimanstanciu}), in the diagonalizable case the Hodge numbers are given nicely by the eigenvalues alone: 
	
	\begin{corollary}
		\label{cor:diagonalizable}
		Assume that $M$ is diagonalizable. Let
		\[
		\lambda_0=\alpha, \qquad \lambda_i=\beta_i, \quad 1\leq i\leq n,
		\]
		where the eigenvalues $\beta_i$ are listed with multiplicity. Then
		\[
		r_{p,j}=\#\left\{(I,J) \ \middle| \begin{array}{l}
			I\subset \{0,1,\ldots,n\}, \ |I|=p,\\
			J\subset \{1,\ldots,n\}, \ |J|=j,\\
			\displaystyle \prod\limits_{i\in I}\lambda_i\prod\limits_{s\in J}\overline{\beta_s}=1
		\end{array}\right\}.
		\]
		Consequently, for a generic diagonalizable $M$, \ie if no proper non-empty product of eigenvalues of $M$ is equal to $1$, the only non-zero Hodge numbers are
		\[
		h^{0,0}=h^{0,1}=h^{n+1,n}=h^{n+1,n+1}=1.
		\]
	\end{corollary}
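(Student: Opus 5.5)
The plan is to diagonalize the monodromy $T_{p,j}$ and count its fixed vectors. When $M$ is diagonalizable, its restriction $f_M$ to $W$ is diagonalizable with eigenvalues $\beta_1,\ldots,\beta_n$ (with multiplicity). By \ref{rmk:choice_basis} we may therefore choose the basis $b_1,\ldots,b_n$ of $W$ to consist of eigenvectors. Then $R=\mathrm{diag}(\beta_1,\ldots,\beta_n)$, so $A=\mathrm{diag}(\alpha,\beta_1,\ldots,\beta_n)$ on the basis $dw,dz_1,\ldots,dz_n$ of $E$, and $B=\mathrm{diag}(\overline{\beta_1},\ldots,\overline{\beta_n})$ on the basis $d\overline z_1,\ldots,d\overline z_n$ of $F$.

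Next I compute on the standard basis of $V_{p,j}$, namely $d\zeta_I\wedge d\overline z_J$ with $|I|=p$ and $|J|=j$. These are eigenvectors of $T_{p,j}$ with eigenvalue $\prod_{i\in I}\lambda_i\prod_{s\in J}\overline{\beta_s}$. Since $T_{p,j}$ is diagonal in this basis, $\dim\Ker(T_{p,j}-\Id)$ is the number of basis vectors whose eigenvalue equals $1$, which is the claimed count.

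For the generic case, I need to identify which $(I,J)$ give product $1$. The products in question run over the multiset $\{\alpha,\beta_1,\ldots,\beta_n,\overline{\beta_1},\ldots,\overline{\beta_n}\}$, which is exactly the spectrum of $M$ with multiplicity. The exponent sets are $I$ (using $\alpha$ or a $\beta$) and $J$ (using $\overline\beta$), and since $J$ only contains the conjugates, distinct pairs $(I,J)$ correspond to distinct sub-multisets. Genericity forces each such sub-multiset to be empty or everything, where the product is $\det M=1$. The empty choice is $(p,j)=(0,0)$; the full choice is $I=\{0,\ldots,n\}$, $J=\{1,\ldots,n\}$, i.e. $(p,j)=(n+1,n)$. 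So $r_{0,0}=r_{n+1,n}=1$ and all other $r_{p,j}$ vanish.

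Plugging these into \eqref{eq:main_hodge_formula} gives $h^{0,0}=r_{0,0}=1$, $h^{0,1}=r_{0,1}+r_{0,0}=1$, $h^{n+1,n}=r_{n+1,n}+r_{n+1,n-1}=1$, and $h^{n+1,n+1}=r_{n+1,n}=1$ (since $r_{n+1,n+1}=0$ by convention). All other Hodge numbers are $0$. The only mildly delicate point is this identification of the index pairs with sub-multisets of the spectrum, so that the genericity hypothesis applies cleanly; everything else is a direct computation.
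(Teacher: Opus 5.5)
Your proof is correct and follows the paper's argument. The paper notes that diagonalizability of $M$ makes $A$, $B$, and hence every $T_{p,j}$, diagonalizable with eigenvalues the products $\prod_{i\in I}\lambda_i\prod_{s\in J}\overline{\beta_s}$; you obtain the same thing concretely by choosing an eigenbasis of $W$, which is legitimate since a change of basis only conjugates $T_{p,j}$. It then isolates the empty and full products exactly as you do.

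One small inaccuracy: when some $\beta_i$ repeat, distinct pairs $(I,J)$ need not give distinct sub-multisets. You never use this, though. You only need that $(I,J)$ indexes a sub-collection of the spectrum counted with multiplicity, and that this sub-collection is empty or full precisely for $(I,J)=(\varnothing,\varnothing)$ and $(\{0,\ldots,n\},\{1,\ldots,n\})$.
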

	
	\begin{proof}
		When $M$ is diagonalizable, the matrices $A$, $B$ and therefore all $T_{p,j}$ are diagonalizable. Hence the geometric and algebraic multiplicities of the eigenvalue $1$ coincide. The eigenvalues of $A$ are $\alpha,\beta_1,\ldots,\beta_n$, while the eigenvalues of $B$ are $\overline{\beta_1},\ldots,\overline{\beta_n}$. Thus the eigenvalues of $T_{p,j}$ are precisely the products appearing in the statement. The formula for $r_{p, j}$ follows from \ref{th:dolbeault_main}.
		
		Generically, the only products equal to $1$ are the empty product and the product of all eigenvalues of $M$, since $\det M=1$. These correspond respectively to $r_{0,0}=1$ and $r_{n+1,n}=1$.
	\end{proof}
	
	\begin{corollary}
		\label{cor:hodge_decomp}
		Endo-Pajitnov manifolds satisfy Hodge decomposition at the level of dimensions \ie
		\[
		\dim_\C H^{k}_{dR}(T_M) = \sum\limits_{p+q=k} \dim_\C H_{\delb}^{p, q}(T_M).
		\]
	\end{corollary}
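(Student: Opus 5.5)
The plan is to reduce the statement to a linear-algebra identity, by comparing \ref{th:dolbeault_main} with \ref{th:derham}. Summing \eqref{eq:main_hodge_formula} over $p+q=k$ gives
\[
\sum_{p+q=k} h^{p,q}_{\delb}(T_M)=\sum_{p+q=k} r_{p,q}+\sum_{p+q=k} r_{p,q-1}=\rho_k+\rho_{k-1},
\qquad \rho_k:=\sum_{p+j=k} r_{p,j}.
\]
Here we use the conventions $r_{p,j}=0$ for $j<0$ or $j>n$, and $r_{p,j}=0$ for $p\notin\{0,\dots,n+1\}$; these conventions make the reindexing $j=q-1$ harmless. Since $h^k(T_M)=g_{k-1}+g_k$, it suffices to show that $\rho_k=g_k$ for every $k$.

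The key step is to package all $V_{p,j}$ with $p+j=k$ into a single exterior power. Put $U:=E\oplus F$, which has dimension $2n+1$, and $S:=A\oplus B\in\End(U)$. The canonical decomposition
\[
\bigwedge\nolimits^k(E\oplus F)=\bigoplus_{p+j=k}\bigwedge\nolimits^p E\otimes\bigwedge\nolimits^j F
\]
is preserved by $\bigwedge^k S$, which acts on each summand as $T_{p,j}$. Hence $\Ker(\bigwedge^k S-\Id)=\bigoplus_{p+j=k}\Ker(T_{p,j}-\Id)$, and therefore
\[
\rho_k=\dim_\C\Ker\Bigl(\bigwedge\nolimits^k S-\Id\Bigr).
\]

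It remains to show that $\bigwedge^k S$ and $\bigwedge^k M$ have the same geometric multiplicity of the eigenvalue $1$. I would argue this by similarity. Over $\C$, $M$ preserves the decomposition $\C^{2n+1}=V\oplus W\oplus\overline W$. In the basis $a,b_1,\dots,b_n,\overline{b_1},\dots,\overline{b_n}$, $M$ acts as $\alpha$ on $V$ and as $R$ on $W$ (by definition of $R$). On $\overline W$ it acts as $\overline R$, because $M$ is real, so $M\overline{b_i}=\overline{Mb_i}$. Thus $M$ is similar to $\operatorname{diag}(\alpha,R,\overline R)$. On the other hand, $S=\operatorname{diag}(\alpha,R^T,\overline R^T)$ is the transpose of this matrix, and a matrix is similar to its transpose. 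Hence $S$ is similar to $M$ (or to $M^T$, which amounts to the same thing).

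Exterior powers are functorial, so $\bigwedge^k$ of conjugate maps are conjugate. Moreover $\bigwedge^k(X^T)=(\bigwedge^k X)^T$ in the induced basis, and the geometric multiplicity $\dim\Ker(Y-\Id)=N-\operatorname{rank}(Y-\Id)$ is invariant under both similarity and transposition. Therefore $\rho_k=g_k$. Since the real dimension of $\Ker(\bigwedge^k M-\Id)$ over $\R$ equals its complex dimension over $\C$, and $h^k(T_M)$ is the same over either field, the identity follows.

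The only step needing real care, and the one I expect to be the main obstacle, is the bookkeeping. I have to check that the monodromy $A$ on $E$ and $B$ on $F$ match the restriction of $M$ (or its transpose) to $V$, $W$, $\overline W$ with correct conjugations. I also have to check that the index ranges agree: $p\le n+1$, $j\le n$ on one side, and $k\le 2n+1$, with $h^{p,n+1}$ contributing only through $r_{p,n}$, on the other.
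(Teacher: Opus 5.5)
Your proposal is correct and follows the paper's argument. You identify $\bigoplus_{p+j=k}V_{p,j}$ with $\bigwedge^k(E\oplus F)$ to get $\sum_{p+j=k}r_{p,j}=g_k$, then sum \eqref{eq:main_hodge_formula} over $p+q=k$ and compare with \ref{th:derham}. Your explicit check that $A\oplus B=\operatorname{diag}(\alpha,R,\overline{R})^T$ is similar to $M$ simply spells out what the paper compresses into the phrase ``up to transpose''.
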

	
	\begin{proof}
		The vector space $\bigoplus\limits_{p+j=m}\left(\bigwedge\nolimits^p E\otimes \bigwedge\nolimits^j F\right)$ is naturally identified with $\bigwedge^m(E\oplus F)$. Under this identification, the direct sum of the operators $T_{p,j}$ is the $m$-th exterior power of the complexified action of $M$, up to transpose. Hence
		\[
		\sum\limits_{p+j=m} r_{p,j}=g_m,
		\]
		where $g_m$ is the geometric multiplicity of $1$ as an eigenvalue of $\bigwedge\nolimits^m M$.
		
		Therefore, summing \ref{th:dolbeault_main} over $p+q=k$, we get
		\[
		\sum\limits_{p+q=k}h^{p,q}_{\delb}(T_M)=\sum\limits_{p+q=k}r_{p,q}+\sum\limits_{p+q=k}r_{p,q-1}=g_k+g_{k-1}=h^k(T_M),
		\]
		which is exactly the formula from \ref{th:derham}.
	\end{proof}
		
	\section{Comparison with the Lie algebra cohomology}
	\label{sec:liealg}
	
	Recall that $T_M$ is a solvmanifold, $T_M \simeq \Gamma \backslash G$ for $G$ a solvable Lie group whose Lie algebra structure was computed explicitly in \cite[Proposition 5.1]{ciulicaotimanstanciu}. While the Dolbeault cohomology of $T_M$ required a bit of work and the apparatus developed in \cite{alextoma}, the Dolbeault cohomology of the Lie algebra of $G$ on the other hand is much easier to handle. We end with a result comparing the two.
	
	Let $\mathfrak g$ be the Lie algebra of the solvable Lie group $G$
	such that $T_M\simeq \Gamma\backslash G$. We use the $(1,0)$-coframe $\eta,\theta_1,\ldots,\theta_n$ of $\mathfrak g^*_\C$ given by \cite[Proposition 5.1]{ciulicaotimanstanciu}, for which
	\begin{equation}
		\label{eq:ecstrlie}
		d\eta=(\log\alpha)\,\eta\wedge\bar\eta, \qquad d\theta_k
		=
		-\sum_{j\ge k}\Delta_{kj}(\eta+\bar\eta)\wedge\theta_j, \forall 1\le k\le n,
	\end{equation}
	where $\Delta=\log R^T$.
	
	For a linear endomorphism $C\in\End(V)$, denote by
	$C^{\langle m\rangle}\in\End(\bigwedge^m V)$ the infinitesimal exterior action
	\[
	C^{\langle m\rangle}(v_1\wedge\cdots\wedge v_m)
	=
	\sum_{\ell=1}^m
	v_1\wedge\cdots\wedge C v_\ell\wedge\cdots\wedge v_m.
	\]
	Thus $\exp\bigl(C^{\langle m\rangle}\bigr) = \bigwedge\nolimits^m(\exp C)$. 
	We use the convention $C^{\langle 0\rangle}=0$.
	
	Take $\mathcal E
	:=
	\C\langle \eta,\theta_1,\ldots,\theta_n\rangle,
	\mathcal F
	:=
	\C\langle \bar\theta_1,\ldots,\bar\theta_n\rangle,
	$ and define the endomorphisms
	\[
	\mathcal A
	:=
	\begin{pmatrix}
		\log\alpha & 0\\
		0 & \Delta
	\end{pmatrix}
	\in\End(\mathcal E),
	\qquad
	\mathcal B
	:=
	\bar\Delta
	\in\End(\mathcal F).
	\]
	For $0\le p\le n+1$ and $0\le j\le n$, set
	\[
	\mathcal V_{p,j}
	:=
	\bigwedge\nolimits^p\mathcal E
	\otimes
	\bigwedge\nolimits^j\mathcal F
	\]
	and
	\[
	\mathcal D_{p,j}
	:=
	\mathcal A^{\langle p\rangle}\otimes \Id
	+
	\Id\otimes\mathcal B^{\langle j\rangle}
	\in \End(\mathcal V_{p,j}).
	\]
	We also put $\mathcal V_{p,j}=0$ and $\mathcal D_{p,j}=0$ if
	$j<0$ or $j>n$.
	
	\begin{proposition}\label{prop:lie-dolbeault-comparison}
		With the notations above:
		
		\begin{enumerate}[a)]
			\item The Lie-algebra Dolbeault cohomology of
			$\mathfrak g$ satisfies
			\[
			0
			\longrightarrow
			\coker\mathcal D_{p,q-1}
			\longrightarrow
			H_{\bar\partial}^{p,q}(\mathfrak g)
			\longrightarrow
			\ker\mathcal D_{p,q}
			\longrightarrow
			0.
			\]
			In particular, $h_{\bar\partial}^{p,q}(\mathfrak g)
			=
			s_{p,q}+s_{p,q-1}$,
			where $s_{p,j}:=\dim_\C\ker\mathcal D_{p,j}$.
			
			\item Reusing the notations of \ref{th:dolbeault_main}, we have the relation $\exp(\mathcal D_{p,j})=T_{p,j}$, so
			\[
			\ker\mathcal D_{p,j}
			\subseteq
			\ker(T_{p,j}-\Id),
			\]
			and therefore $h_{\bar\partial}^{p,q}(\mathfrak g)
			\le
			h_{\bar\partial}^{p,q}(T_M)$.
			
			\item The full Dolbeault cohomology of $T_M$ is computed by invariant
			forms if and only if
			\[
			\Spec(\mathcal D_{p,j})
			\cap
			2\pi i\bigl(\Z\setminus\{0\}\bigr)
			=
			\varnothing
			\]
			for every $0\le p\le n+1$ and every $0\le j\le n$.
	\end{enumerate}
		
	\end{proposition}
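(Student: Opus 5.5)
For part a), I will compute the Lie-algebra Dolbeault complex $\bigl(\bigwedge^p\mathfrak g^{*1,0}\otimes\bigwedge^q\mathfrak g^{*0,1},\bar\partial\bigr)$ directly from the structure equations \eqref{eq:ecstrlie}. The $(0,1)$-coframe is $\bar\eta,\bar\theta_1,\ldots,\bar\theta_n$. Taking $(p,q+1)$-components, I get
\[
\bar\partial\eta=(\log\alpha)\,\eta\wedge\bar\eta,\qquad \bar\partial\theta_k=-\sum_j\Delta_{kj}\,\bar\eta\wedge\theta_j,\qquad \bar\partial\bar\theta_k=-\sum_j\bar\Delta_{kj}\,\bar\eta\wedge\bar\theta_j,\qquad \bar\partial\bar\eta=0.
\]
Hence, up to a sign convention, $\bar\partial$ acts on each generator of $\mathcal E\oplus\mathcal F$ as $\bar\eta\wedge(\text{linear map})$: it is $\mathcal A$ on $\mathcal E$ (with a sign) and $\mathcal B$ on $\mathcal F$. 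By the Leibniz rule, on $\omega\in\mathcal V_{p,j}$ this gives $\bar\partial\omega=\pm\bar\eta\wedge\mathcal D_{p,j}\omega$, because $\bar\eta\wedge\bar\eta=0$ kills the cross terms. I also get $\bar\partial(\bar\eta\wedge\omega)=0$.

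The complex therefore splits as $\mathcal V_{p,q}\oplus\bar\eta\wedge\mathcal V_{p,q-1}$ in bidegree $(p,q)$, and $\bar\partial$ is the map $\mathcal V_{p,q}\to\bar\eta\wedge\mathcal V_{p,q}$ given by $\pm\mathcal D_{p,q}$. Cocycles are $\ker\mathcal D_{p,q}\oplus\bar\eta\wedge\mathcal V_{p,q-1}$, and coboundaries are $\bar\eta\wedge\operatorname{Im}\mathcal D_{p,q-1}$. This gives the short exact sequence, and the dimension formula follows from $\dim\coker=\dim\ker$. The step I must check carefully is the sign consistency: I need the sign of $\bar\eta\wedge$ when it is commuted past $p$ one-forms to be uniform on $\mathcal V_{p,j}$, so that the map is exactly $\pm\mathcal D_{p,j}$ rather than a signed mixture. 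I expect this to work because every term picks up the same parity.

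For part b), $\exp\mathcal A=\operatorname{diag}(\alpha,R^T)=A$ and $\exp\mathcal B=\overline{R^T}=B$. Since $\mathcal A^{\langle p\rangle}\otimes\Id$ and $\Id\otimes\mathcal B^{\langle j\rangle}$ commute, $\exp\mathcal D_{p,j}=\bigwedge^pA\otimes\bigwedge^jB=T_{p,j}$, so $\ker\mathcal D\subseteq\ker(\exp\mathcal D-\Id)$. The inequality of Hodge numbers then follows by comparing the two formulas, using $s\le r$ termwise.

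For part c), I will use the Jordan decomposition. $\ker(e^{D}-\Id)$ is the sum over eigenvalues $\mu\in2\pi i\Z$ of the corresponding parts of the kernel. On the generalized eigenspace of $\mu$, $e^{D}-\Id=e^{\mu}(e^{D-\mu}-\Id)$, and $e^{N}-\Id=N\cdot(\text{invertible})$ for nilpotent $N$. Therefore $\dim\ker(e^D-\Id)=\sum_{\mu\in2\pi i\Z}\dim\ker(D-\mu)$. It follows that $s_{p,j}=r_{p,j}$ if and only if no eigenvalue of $\mathcal D_{p,j}$ lies in $2\pi i(\Z\setminus\{0\})$. Since the inclusion of invariant forms into all forms is injective on Dolbeault cohomology for compact complex solvmanifolds (by averaging; I assume this standard fact), it is an isomorphism if and only if $h^{p,q}(\mathfrak g)=h^{p,q}(T_M)$ for all $p,q$. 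By the formulas from a) and \ref{th:dolbeault_main}, this holds if and only if $s_{p,j}=r_{p,j}$ for all $p,j$. That equivalence uses a triangular induction in $q$, starting from $r_{p,-1}=s_{p,-1}=0$, together with the termwise inequality $s\le r$. I expect the main obstacle to be justifying the injectivity of the averaging map in this setting. I would cite the general result for unimodular Lie groups with lattices; $G$ is unimodular since it admits a lattice.
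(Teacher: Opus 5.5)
Your proposal is correct and follows the paper's proof. In a) and b) you do the same computation as the paper. Your sign worry resolves: the Leibniz sign $(-1)^{\ell-1}$ cancels against moving $\bar\eta$ to the front, so uniformly $\bar\partial v=-\bar\eta\wedge\mathcal D_{p,j}v$. In c), your factorization $e^{N}-\Id=N\cdot(\text{invertible})$ on generalized eigenspaces is the paper's $Q(D)$ argument.

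Your one addition is invoking injectivity of $H_{\bar\partial}(\mathfrak g)\to H_{\bar\partial}(T_M)$ (averaging over $\Gamma\backslash G$, with $G$ unimodular). This upgrades equality of dimensions to an isomorphism induced by inclusion, a step the paper leaves implicit when it says the final assertion ``follows''. Given the termwise inequality $s\le r$, no induction in $q$ is needed there.
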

	
	\begin{proof}
		\begin{enumerate}[a)]
			\item The structure equations \eqref{eq:ecstrlie} can be rewritten in a compact form:
			\begin{equation}
				\begin{split}
					\bar\partial v = -\bar\eta\wedge\mathcal D_{p,j}v, \ &\forall v\in\mathcal V_{p,j} \\
					\bar\partial(\bar\eta\wedge u) = 0, \ &\forall u\in\mathcal V_{p,j-1}.
				\end{split}
			\end{equation}
			
			In $\bigwedge\nolimits^{p,q}\mathfrak g^*_\C =
			\mathcal V_{p,q}
			\oplus
			\bar\eta\wedge\mathcal V_{p,q-1}$, a form $v+\bar\eta\wedge u$, with
			$v\in\mathcal V_{p,q}$ and $u\in\mathcal V_{p,q-1}$, is thus
			$\bar\partial$-closed if and only if $\mathcal D_{p,q}v=0$.
			On the other hand, the $\bar\partial$-exact forms in bidegree $(p,q)$ are precisely those of type $\bar\eta\wedge\mathcal D_{p,q-1}w$ for some $w\in\mathcal V_{p,q-1}$.
			Projecting onto the first summand gives us the short exact sequence
			\[
			0
			\longrightarrow
			\coker\mathcal D_{p,q-1}
			\longrightarrow
			H_{\bar\partial}^{p,q}(\mathfrak g)
			\longrightarrow
			\ker\mathcal D_{p,q}
			\longrightarrow
			0.
			\]
			Taking dimensions, $h_{\bar\partial}^{p,q}(\mathfrak g)
			=
			\dim\ker\mathcal D_{p,q}
			+
			\dim\coker\mathcal D_{p,q-1}$. 
			Since $\mathcal D_{p,q-1}$ is an endomorphism of a finite-dimensional
			vector space, $\dim\coker\mathcal D_{p,q-1}
			=
			\dim\ker\mathcal D_{p,q-1}$. 
			Hence
			\[
			h_{\bar\partial}^{p,q}(\mathfrak g)
			=
			s_{p,q}+s_{p,q-1}.
			\]
			
			\item We now compare this with the global formula in \ref{th:dolbeault_main}. By definition,
			\[
			\exp\mathcal A
			=
			\begin{pmatrix}
				\alpha & 0\\
				0 & R^T
			\end{pmatrix}
			=
			A,
			\qquad
			\exp\mathcal B
			=
			\overline{R^T}
			=
			B.
			\]
			Using $\exp\bigl(C^{\langle m\rangle}\bigr)
			=
			\bigwedge\nolimits^m(\exp C)$,
			we obtain
			\[
			\exp(\mathcal D_{p,j})
			=
			\bigwedge\nolimits^p A
			\otimes
			\bigwedge\nolimits^j B
			=
			T_{p,j}.
			\]
			Therefore $\ker\mathcal D_{p,j}
			\subseteq
			\ker(T_{p,j}-\Id)$. The inequality $h_{\bar\partial}^{p,q}(\mathfrak g)
			\le
			h_{\bar\partial}^{p,q}(T_M)$ follows immediately.
			
			\item We now verify when equality holds. Let $D$ be any of the
			operators $\mathcal D_{p,j}$, and let $T=\exp D$. Consider the entire
			function
			\[
			Q(z)=
			\begin{cases}
				\dfrac{e^z-1}{z}, & z\ne 0,\\[1.2ex]
				1, & z=0.
			\end{cases}
			\]
			Then $T-\Id
			=
			\exp D-\Id
			=
			D\,Q(D)
			=
			Q(D)\,D$.
			The operator $Q(D)$ is invertible if and only if no eigenvalue of $D$
			lies in $2 \pi i\bigl(\Z\setminus\{0\}\bigr)$. If so, $\ker D=\ker(T-\Id)$ and $\coker D\simeq \coker(T-\Id)$.
			
			Conversely, if $D$ has an eigenvalue
			$\lambda\in 2\pi i(\Z\setminus\{0\})$, then on the generalized
			$\lambda$-eigenspace one has
			\[
			T-\Id
			=
			\exp D-\Id
			=
			\exp N-\Id,
			\]
			where $N=D-\lambda\Id$ is nilpotent. Since $\exp N-\Id=N\cdot Q(N)$ and
			$Q(N)$ is invertible, the kernel of $T-\Id$ on this generalized
			eigenspace is non-zero, whereas $D$ is invertible there. Hence in this case we have a strict inclusion $\ker D\subsetneq \ker(T-\Id)$.
			
			The final assertion follows by requiring the same condition for $D=\mathcal D_{p,j}$ 
			for all $p$ and $j$.
		\end{enumerate}		
	\end{proof}
	
	\begin{remark}
		If $M$ is diagonalizable, the preceding criterion has a particularly
		simple form. Let $\mu_i:=\log\beta_i, \ \forall 1 \le i \le n$. Then $s_{p,j}$ counts the triples
		\[
		(\varepsilon,I,J),
		\qquad
		\varepsilon\in\{0,1\},\quad
		I,J\subset\{1,\ldots,n\},
		\]
		with $\varepsilon+|I|=p, |J|=j$,
		such that
		\[
		\varepsilon\log\alpha
		+
		\sum_{i\in I}\mu_i
		+
		\sum_{\ell\in J}\bar\mu_\ell
		=
		0.
		\]
		On the other hand, $r_{p,j}$ counts the same triples for which
		\[
		\alpha^\varepsilon
		\prod_{i\in I}\beta_i
		\prod_{\ell\in J}\bar\beta_\ell
		=
		1,
		\]
		or equivalently
		\[
		\varepsilon\log\alpha
		+
		\sum_{i\in I}\mu_i
		+
		\sum_{\ell\in J}\bar\mu_\ell
		\in
		2\pi i\Z.
		\]
		Thus invariant forms compute the Dolbeault cohomology precisely when no
		non-zero integral $2\pi i$-resonance occurs among these logarithmic
		weights.
	\end{remark}

	\bigskip
	
	\textbf{Disclaimer.} The authors acknowledge the use of general purpose large language models as an aid in some computations and for proof checking.


\bigskip
	
\begin{thebibliography}{100}

		\bibitem[AK01]{ak} Y. Abe, K. Kopfermann, {\it Toroidal groups}, Lecture Notes in Mathematics, vol. 1759, Springer-Verlag, Berlin (2001).
        
        \bibitem[Ciu25a]{ciulica1} C. Ciulic\u a, {\it A new class of non-\K \ metrics}, Complex Manifolds \textbf{12(1)} (2025).
        
        \bibitem[Ciu25b]{ciulica2} C. Ciulic\u a, {\it Curves on Endo-Pajitnov Manifolds}, SIGMA \textbf{21} (2025), 069, 8 pages.
        
        \bibitem[COS25]{ciulicaotimanstanciu} C. Ciulic\u a, A. Otiman, M. Stanciu, {\it Special non-K\"ahler metrics on Endo-Pajitnov manifolds}, Ann. Mat. Pura Appl. \textbf{204(4)} (2025), 1425--1441.

        \bibitem[DV22]{vuliOT} \c S. Deaconu, V. Vuletescu, {\em On locally conformally \K \ metrics on Oeljeklaus-Toma manifolds}, 
Manuscr. Math. \textbf{171} (2023), 643-647.

       % \bibitem[Dub14]{dub14} A. Dubickas, {\it Nonreciprocal units in a number field with an application to Oeljeklaus-Toma manifolds}, New York J. Math. \textbf{20} (2014), 257--274.
		
		\bibitem[EP20]{pajitnov1} H. Endo, A. Pajitnov, {\em On generalized Inoue manifolds}, Proceedings of the International Geometry Center \textbf{13} 4 (2020), 24--39.

        %\bibitem[FGV19]{fgv19} A. Fino, G. Grantcharov, L. Vezzoni, {\it Astheno-\K \ and Balanced Structures on Fibrations}, Int. Math. Res. \textbf{2019} 22, 7093-7117.

       % \bibitem[FGV22]{fgv22} A. Fino, G. Grantcharov, M. Verbitsky, {\it Special Hermitian structures on suspensions}, arXiv:2208.12168.

       % \bibitem[FV16]{fv} A. Fino, L. Vezzoni, {\it On the existence of balanced and SKT metrics on nilmanifolds}, Proc. Amer. Math. Soc.,
%144(6), 2016, 2455–2459.

       % \bibitem[FWW13]{fww13} J. Fu, Z. Wang, D. Wu, {\it Semilinear equations, the $\gamma_k$ function, and generalized Gauduchon metrics}, J. Eur. Math. Soc. \textbf{15} (2013), 659--680.

        %\bibitem[HMT23]{tomassini} R. Hind, C. Medori, A. Tomassini, {\em Families of Almost Complex Structures and Transverse $(p, p)$-Forms}, J. Geom. Anal. \textbf{33} 334 (2023).

       % \bibitem[Hig08]{higham} N. J. Higham, {\em Functions of Matrices: Theory and Computation}, Society for Industrial and Applied Mathematics (2008).

        \bibitem[Ino74]{inoue} M. Inoue, {\it On surfaces of Class $VII_0$}, Invent. Math. \textbf{24} (1974), 269--310.

       % \bibitem[JY93]{jostyau} J. Jost, S.-T. Yau, {\it A nonlinear elliptic system for maps from Hermitian to Riemannian manifolds and rigidity theorems in Hermitian geometry}, Acta Math. \textbf{170} 2 (1993), 221--254.

       % \bibitem[Ka13]{kasuya} H. Kasuya, {\it Formality and hard Lefschetz property of aspherical manifolds}, Osaka J. Math. \textbf{50}(2) (2013), 439--455.

       % \bibitem[Mic82]{m82} M.L. Michelson, {\it On the existence of special metrics in complex geometry}, Acta Math. \textbf{149} (1982), 261--295.

        \bibitem[OeTo05]{ot} K. Oeljeklaus, M. Toma, {\em Non-\K \ compact complex manifolds associated to number fields}, Ann. Inst. Fourier \textbf{55} 1 (2005), 161--171.
        
        \bibitem[OV]{OV:book} L. Ornea, M. Verbitsky, {\em Principles of locally conformally K\"ahler geometry}, Birkh\"auser, 2025. arXiv:2208.07188.

        \bibitem[OtTo21]{alextoma} A. Otiman, M. Toma, {\em Hodge decomposition for Cousin groups and for Oeljeklaus-Toma manifolds}, Ann. Sc. Norm. Super. Pisa Cl. Sci. \textbf{XXII} (2021), 485--503.

        %\bibitem[Vai76]{vai76} I. Vaisman, {\it On locally conformal almost Kähler manifolds}, Israel J. Math. \textbf{24} 3--4 (1976), 338--351.        
        
        \bibitem[Vog82]{vogt1} C. Vogt, {\it Line bundles on toroidal groups}, J. Reine Angew. Math. \textbf{335} (1982), 197--215.
        
        \bibitem[Vog83]{vogt2} C. Vogt, {\it Two remarks concerning toroidal groups}, Manuscripta Math. \textbf{41} (1983), no. 1-3, 217--232.
        
\end{thebibliography}
\end{document}